\numberwithin{figure}{section}
\newtheorem{thm}{Theorem}[section]
\newtheorem{lem}{Lemma}[section]
\newtheorem{cor}[thm]{Corollary}
\newtheorem{exm}{Example}[section]
\newtheorem{asm}{Assumption}[section]
\numberwithin{equation}{section}
\numberwithin{table}{section}
\begin{document}
	\begin{frontmatter}
		\title{A stochastic column-block gradient descent method for solving nonlinear systems of equations\footnote{}}
		\date{today}
		\author{Naiyu Jiang}
		\author{Wendi Bao}
		\ead{baowendi@sina.com}
		\author{Lili Xing}
		\author{Weiguo Li}
		\address{College of Science,
			China University of Petroleum, Qingdao 266580, P.R. China}	
		
		\begin{abstract}
			In this paper, we propose a new stochastic column-block gradient descent method for solving nonlinear systems of equations. It has a descent direction and holds an approximately optimal step size obtained through an optimization problem. We provide a thorough convergence analysis, and derive an upper bound for the convergence rate of the new method. Numerical experiments demonstrate that the proposed method outperfroms the existing ones.
		\end{abstract}
		\begin{keyword}
			Nonlinear equations, Gradient descent method, Gauss-Seidel method, Block method.
		\end{keyword}
	\end{frontmatter}
	
	\section{Introduction}
	\label{sec1}
	
In this paper, we consider the nonlinear system of  equations
\begin{equation}\label{eq1.1}
	f(x)=0,
\end{equation}
where $f(x):\mathcal{D}\subseteq \mathbb{R}^n\to \mathbb{R}^m$ is a continuously differentiable function, $x\in \mathbb{R}^n$ is the variable and \eqref{eq1.1} is solvable, and some solution is denoted by $x_*$. Nonlinear equations are omnipresent and play crucial roles in scientific computing and mathematical modeling, ranging from differential equations, integral equations, and optimization problems to data-driven modeling and machine learning.
	 
	 It is clear that (\ref{eq1.1}) is equivalent to the nonlinear least-squares problem 
	\begin{equation}\label{eq1.2}
		\mathop{\min}_{x\in\mathcal{D}}\frac{1}{2}\|f(x)\|_2^2.
	\end{equation}
Newton method and Newton-like methods (Gauss Newton \cite{Blaschke97}, inexact Newton \cite{Dembo82}, Newton-Krylov methods \cite{Knoll04}) are famous methods to solve the above nonlinear least squares problem.	However, the expensive computational cost still limits the application of Newton’s method to some practical applications, such as data-driven modeling problems and optimization problems arising from machine learning. To accelerate the convergence of existing methods,	Hao \cite{Hao21} proposed the gradient descent (GD) method and the stochastic gradient descent (SGD) method, which is the stochastic row-block version of the GD method. They theoretically prove that the GD method has linear convergence in general. The numerical examples show that the two methods are more efficient than the Newton method. Motivated by the doubly stochastic block Gauss-Seidel (DSBGS) method, Bao \cite{Bao25} proposed the doubly stochastic block method for nonlinear equations (DSBN) method. Furthermore, to reduce the computations of the Jacobi matrix in random probabilities, a stochastic column-block method (SCBN) with uniform probabilities is established for nonlinear equations.

	    Inspired by \cite{Hao21} and \cite{Bao25}, we propose a new stochastic column-block gradient descent (SCBGD) method with an approximately optimal step size for solving nonlinear equations. We verify that the new method has a descent direction and derive its convergence.
	Numerical experiments are presented to demonstrate the efficiency of the new method.
	
	The notations in this paper are expressed as follows. For any matrix $A\in R^{m\times n}$, $\| \cdot\|_2$, $\sigma_{\min}$, $\sigma_{\max}$ denote the Euclidean norm, the maximum and minimum nonzero singular values of a matrix $A$, respectively. Let $\nabla f(x)$ be the Jacobi matrix of $f(x)$ and $\nabla f_{:,j}(x)$ is the $j$th column of the Jacobi matrix $\nabla f(x)$.
		
	The rest of this paper is organized as follows. In Section \ref{sec2}, we propose the SCBGD method and analyze its convergence. We perform some numerical experiments to verify the efficiency of the SCBGD method in Section \ref{sec3}. Finally, we give the conclusions in Section \ref{sec4}.
	
	\section{Stochastic column-block gradient descent method}
	\label{sec2}
	First, we recall the gradient descent (GD) method \cite{Hao21}. The iterative format of GD method for solving the optimization problem (\ref{eq1.2}) is 
	\begin{equation*}
		x_{k+1}=x_k-\eta_k\nabla f(x_k)^Tf(x_k),
	\end{equation*} 
	where $\eta_k$ is the step size. Using Taylor expansion, we have $f(x_{k+1})\approx f(x_k)-\eta_k\nabla f(x_k)\nabla f(x_k)^Tf(x_k)\approx0$, which implies that $\eta_k=\frac{v^Tf(x_k)}{v^T\nabla f(x_k)\nabla f(x_k)^Tf(x_k)}$ for any given $v$. Choosing $v=\nabla f(x_k)\nabla f(x_k)^Tf(x_k)$ can let $f(x_{k+1})^Tf(x_{k+1})\leq f(x_k)^Tf(x_k)$, which guarantees the convergence. Then the iterative format of GD method is written as
	\begin{equation*}
		x_{k+1}=x_k-\frac{\|\nabla f(x_k)^Tf(x_k)\|_2^2}{\|\nabla f(x_k)\nabla f(x_k)^Tf(x_k)\|_2^2}\nabla f(x_k)^Tf(x_k).
	\end{equation*}  
	
	Since GD method needs to calculate the entire Jacobi matrix every iteration, which is a large computation for higher dimensional problems, we consider the column block type of GD method. We take some columns of $\nabla f(x_k)$, then the iterative format can be written as 
	\begin{equation*}
		x_{k+1}=x_k-\eta_kI_{:,\xi_k}\nabla f_{:,\xi_k}(x_k)^Tf(x_k),
	\end{equation*}
	where  $\eta_k$ is the step size and $\xi_k$ is the set of selected columns. Let $p_k=I_{:,\xi_k}\nabla f_{:,\xi_k}(x_k)^Tf(x_k)$. Using Taylor expansion, we have $f(x_{k+1})\approx f(x_k)-\eta_k\nabla f(x_k)p_k\approx0$. Then we obtain
	
	\begin{equation}\label{eq:f^Tf}
		f(x_{k+1})^Tf(x_{k+1})=f(x_k)^Tf(x_k)-2\eta_k\left(\nabla f(x_k)p_k\right)^Tf(x_k)+\eta_k^2\left(\nabla f(x_k)p_k\right)^T\nabla f(x_k)p_k.\\
	\end{equation}
	In order to minimize $f(x_{k+1})^Tf(x_{k+1})$, using the properties of quadratic functions of $\eta_k$, we have $\eta_k=\frac{\left(\nabla f(x_k)p_k\right)^Tf(x_k)}{\left(\nabla f(x_k)p_k\right)^T\nabla f(x_k)p_k}$.
	To make the method more general, we can choose
	\begin{equation}
		\eta_k=\delta\frac{\left(\nabla f(x_k)p_k\right)^Tf(x_k)}{\left(\nabla f(x_k)p_k\right)^T\nabla f(x_k)p_k},		
	\end{equation}
	where $\delta\in(0,2)$.
	
	\begin{lem}
		Let $d_k=-\eta_kI_{:,\xi_k}\nabla f_{:,\xi_k}(x_k)^Tf(x_k)$. Then $d_k$  is a strict descent direction of $g(x)=\frac{1}{2}\|f(x)\|_2^2$, if $\|\nabla f_{:,\xi_k}(x_k)^Tf(x_k)\|_2^2\neq0$.
		\begin{proof}
			It is easy to see
			\begin{align*}
				(-\eta_kI_{:,\xi_k}\nabla f_{:,\xi_k}(x_k)^Tf(x_k))^T\nabla g(x_k)
				&=-\eta_kf(x_k)^T\nabla f_{:,\xi_k}(x_k)I_{\xi_k,:}\nabla f(x_k)^Tf(x_k)\\
				&=-\eta_kf(x_k)^T\nabla f_{:,\xi_k}(x_k)\nabla f_{:,\xi_k}(x_k)^Tf(x_k)\\
				&=-\eta_k\|\nabla f_{:,\xi_k}(x_k)^Tf(x_k)\|_2^2.
			\end{align*}
			Since $\eta_k=\delta\frac{\|\nabla f_{:,\xi_k}(x_k)^Tf(x_k)\|_2^2}{\|\nabla f_{:,\xi_k}(x_k)\nabla f_{:,\xi_k}(x_k)^Tf(x_k)\|_2^2}$ and $\|\nabla f_{:,\xi_k}(x_k)^Tf(x_k)\|_2^2\neq0$, we have $(-\eta_kI_{:,\xi_k}\nabla f_{:,\xi_k}(x_k)^Tf(x_k))^T\nabla g(x_k)<0$.
		\end{proof}
	\end{lem}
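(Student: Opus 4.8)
The plan is to compute the directional derivative $d_k^T\nabla g(x_k)$ directly, exploiting the fact that $\nabla g(x_k)=\nabla f(x_k)^Tf(x_k)$ and that $I_{:,\xi_k}$ is a column-selection matrix, so $I_{\xi_k,:}\nabla f(x_k)^T = \nabla f_{:,\xi_k}(x_k)^T$ picks out exactly the rows of $\nabla f(x_k)^T$ corresponding to the selected columns of the Jacobian. First I would substitute $d_k = -\eta_k I_{:,\xi_k}\nabla f_{:,\xi_k}(x_k)^Tf(x_k)$ and $\nabla g(x_k)=\nabla f(x_k)^Tf(x_k)$, then transpose and regroup the factors so that the product $I_{\xi_k,:}\nabla f(x_k)^T$ collapses to $\nabla f_{:,\xi_k}(x_k)^T$; this turns the inner product into $-\eta_k f(x_k)^T\nabla f_{:,\xi_k}(x_k)\nabla f_{:,\xi_k}(x_k)^Tf(x_k) = -\eta_k\|\nabla f_{:,\xi_k}(x_k)^Tf(x_k)\|_2^2$.

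Next I would argue that $\eta_k>0$ under the stated hypothesis. Writing $\eta_k = \delta\,\|\nabla f_{:,\xi_k}(x_k)^Tf(x_k)\|_2^2 \big/ \|\nabla f_{:,\xi_k}(x_k)\nabla f_{:,\xi_k}(x_k)^Tf(x_k)\|_2^2$ with $\delta\in(0,2)$, the numerator is positive precisely because $\|\nabla f_{:,\xi_k}(x_k)^Tf(x_k)\|_2^2\neq 0$, and the denominator is positive because a nonzero vector $w := \nabla f_{:,\xi_k}(x_k)^Tf(x_k)$ cannot be annihilated by $\nabla f_{:,\xi_k}(x_k)$ without $w$ lying in the kernel of that matrix — but $w$ is in the row space (range of the transpose), which is orthogonal to the kernel, so $\nabla f_{:,\xi_k}(x_k)w = 0$ would force $w=0$, a contradiction. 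Hence $\eta_k>0$, and therefore $d_k^T\nabla g(x_k) = -\eta_k\|\nabla f_{:,\xi_k}(x_k)^Tf(x_k)\|_2^2 < 0$, which is exactly the definition of a strict descent direction for $g$.

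The computation is essentially routine linear algebra; the only subtlety — and the one place I would be slightly careful — is justifying that the denominator of $\eta_k$ is nonzero, i.e. that $\|\nabla f_{:,\xi_k}(x_k)^Tf(x_k)\|_2^2\neq 0$ already guarantees $\|\nabla f_{:,\xi_k}(x_k)\nabla f_{:,\xi_k}(x_k)^Tf(x_k)\|_2^2\neq 0$ so that $\eta_k$ is well-defined and finite. This follows from the range/kernel orthogonality observation above (equivalently, $\|B B^T v\|_2 = 0 \Rightarrow v^T B B^T v = \|B^T v\|_2^2 = 0$ with $B = \nabla f_{:,\xi_k}(x_k)$), so there is no real obstacle; the bulk of the proof is the three-line algebraic simplification that the author has already carried out.
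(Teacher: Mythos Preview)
Your proposal is correct and follows essentially the same route as the paper: compute $d_k^T\nabla g(x_k)$ using $\nabla g(x_k)=\nabla f(x_k)^Tf(x_k)$ and the column-selection identity $I_{\xi_k,:}\nabla f(x_k)^T=\nabla f_{:,\xi_k}(x_k)^T$, obtain $-\eta_k\|\nabla f_{:,\xi_k}(x_k)^Tf(x_k)\|_2^2$, and conclude from $\eta_k>0$. Your extra paragraph verifying that the denominator of $\eta_k$ is nonzero (via the range/kernel orthogonality argument for $B=\nabla f_{:,\xi_k}(x_k)$) is a welcome addition that the paper leaves implicit.
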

	
	Suppose that we take $q$ columns at a time from the Jacobi matrix, then the number of the set of column blocks has $\tau=\left(^n_q\right)$ elements. Let $\{J_1,J_2,\cdots,J_\tau\}$ denotes the set of column blocks. Here we take the uniform probability to choose the selected column block from $\{J_1,J_2,\cdots,J_\tau\}$, then the SCBDG method is derived and described in Algorithm \ref{alg: SCBGD} as follows.
	\begin{algorithm}
		\caption{SCBGD: Stochastic Column-Block Gradient Descent method}\label{alg: SCBGD}
		\begin{algorithmic}[1]
			\State \textbf{parameters:} $\delta\in(0,2)$, $q>0$
			\State \textbf{initialization:} Choose $x_0\in R^n$
			\For {$k=1,2,\cdots $} 
			\State Pick $\xi_k$ uniformly at random from the set $\left\{J_1,J_2,\cdots,J_\tau\right\}$
			\State Compute $x_{k+1}=x_k-\delta\frac{\|\nabla f_{:,\xi_k}(x_k)^Tf(x_k)\|_2^2}{\|\nabla f_{:,\xi_k}(x_k)\nabla f_{:,\xi_k}(x_k)^Tf(x_k)\|_2^2}I_{:,\xi_k}\nabla f_{:,\xi_k}(x_k)^Tf(x_k)$
			\EndFor
			\State \textbf{return:} $x_{k}$
		\end{algorithmic}
	\end{algorithm}
	
	\begin{asm}\label{asm1}
		Let $g(x)=\frac{1}{2}f(x)^Tf(x)$ and a set $Q$ be nonempty and bounded, which attains all minimum value $g(x)$. Assume that the function $g(x)$ is convex and satisfies the following assumptions:
		
		i) The nonlinear system of equations $f(x):\mathcal{D}\subseteq\mathbb{R}^n\to\mathbb{R}^m$ on a bounded closed $\mathcal{D}$ is derivable and the derivative of $f(x)$ is continuous in $\mathcal{D}$.
		
		ii) The level set $\Omega=\left\{x \, | \, g(x)\leq g(x_{0})\right\}$ is bounded. There is a finite $M_0$ such that the level set for $g$ defined by $x_0$ is bounded, that is
		\begin{equation}\label{R0}
			R_0 = \underset{x}{\max}\{\underset{x_*\in Q}{\max}\|x-x^{*}\|_2^2:g(x)\leq g(x_{0})\}.
		\end{equation} 
		
		iii) We define the vector of partial derivatives corresponding to the variables in the vector $x(J_i)=I_{:,J_i}^Tx$ as
		$\nabla_ig(x)=I_{:,\xi_i}^T\nabla g(x)$, $i=1,2,...,\tau$. For $\forall x\in \mathcal{D}(f)$, it holds that
		\begin{equation}\label{eq:g'}
			\|\nabla_ig(x+I_{:,J_i}p_i)-\nabla_ig(x)\|_2\leq L_i\|p_i\|_2 , \forall p_i\in \mathbb{R}^{\hat{\xi}_i},
		\end{equation}
		where $L_i$ is the Lipschitz constant corresponding to block $i$ and $g(x)$ is the block-coordinatewise Lipschitz continuous.
	\end{asm}
	\begin{lem}\label{lem:g(v)}
		\cite{Beck13}. If $g(x)$ satisfies Assumption \ref{asm1} and $i\in\{1,2,...,\tau\}$. Let $u,v\in \mathbb{R}^n$ be two vectors which differ only in the $i$th block, i.e. there exists a $p\in \mathbb{R}^{\hat{J}_i}$ such that $v-u=I_{:,J_i}p$. Then
		\begin{equation*}
			g(v)\leq g(u)+<\nabla g(u),v-u>+\frac{L_i}{2}\|u-v\|_2^2.
		\end{equation*} 
	\end{lem}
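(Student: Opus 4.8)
The plan is to reduce the block statement to a one-dimensional calculus argument along the segment joining $u$ and $v$, which by hypothesis moves only within the $i$th coordinate block. Write $v-u = I_{:,J_i}p$ with $p\in\mathbb{R}^{\hat{J}_i}$ and define $\phi(t) = g(u + tI_{:,J_i}p)$ for $t\in[0,1]$. By Assumption~\ref{asm1}(i) the map $g$ is continuously differentiable, so $\phi$ is $C^1$ with $\phi'(t) = \langle \nabla g(u + tI_{:,J_i}p),\, I_{:,J_i}p\rangle = \langle \nabla_i g(u + tI_{:,J_i}p),\, p\rangle$, where I use the definition $\nabla_i g(x) = I_{:,J_i}^T\nabla g(x)$ from Assumption~\ref{asm1}(iii). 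The fundamental theorem of calculus then gives $g(v) - g(u) = \phi(1) - \phi(0) = \int_0^1 \langle \nabla_i g(u + tI_{:,J_i}p),\, p\rangle\,dt$.

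Next I would split the integrand by adding and subtracting $\nabla_i g(u)$. The constant piece integrates to $\langle \nabla_i g(u), p\rangle = \langle I_{:,J_i}^T\nabla g(u), p\rangle = \langle \nabla g(u),\, I_{:,J_i}p\rangle = \langle \nabla g(u),\, v-u\rangle$, which is exactly the first-order term of the claim. For the remainder $\int_0^1 \langle \nabla_i g(u + tI_{:,J_i}p) - \nabla_i g(u),\, p\rangle\,dt$, apply the Cauchy--Schwarz inequality and then the block-coordinatewise Lipschitz bound \eqref{eq:g'} with displacement $tp$, obtaining $\langle \nabla_i g(u + tI_{:,J_i}p) - \nabla_i g(u),\, p\rangle \le \|\nabla_i g(u + tI_{:,J_i}p) - \nabla_i g(u)\|_2\,\|p\|_2 \le L_i t\,\|p\|_2^2$; integrating over $[0,1]$ yields $\tfrac{L_i}{2}\|p\|_2^2$. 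Since $I_{:,J_i}$ has orthonormal columns, $\|p\|_2 = \|I_{:,J_i}p\|_2 = \|v-u\|_2$, and collecting the three contributions gives $g(v) \le g(u) + \langle \nabla g(u),\, v-u\rangle + \tfrac{L_i}{2}\|u-v\|_2^2$, as asserted.

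The argument is essentially routine, and indeed this is a standard block-descent lemma taken from \cite{Beck13}; the only points needing care are bookkeeping ones. First, one must know that the segment $\{u + tI_{:,J_i}p : t\in[0,1]\}$ stays in $\mathcal{D}(f)$ so that $\nabla g$ and the estimate \eqref{eq:g'} are available along it — this is guaranteed by convexity of the domain, or in the intended application by the fact that the iterates remain in the bounded level set $\Omega$. Second, one must keep straight the identification of the block gradient $\nabla_i g$ with the restriction $I_{:,J_i}^T\nabla g$, so that the directional derivative of $g$ along $I_{:,J_i}p$ can be written purely in terms of $\nabla_i g$ and the Lipschitz condition \eqref{eq:g'} can be invoked. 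Third, one uses the isometry $\|I_{:,J_i}p\|_2 = \|p\|_2$ to convert the bound on $\|p\|_2^2$ into the stated bound on $\|u-v\|_2^2$. None of these is a genuine obstacle, so the short self-contained proof above suffices.
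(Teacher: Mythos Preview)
Your argument is correct and is exactly the standard proof of the block descent lemma: restrict to the segment $t\mapsto u+tI_{:,J_i}p$, apply the fundamental theorem of calculus, split off the linear term, and bound the remainder via Cauchy--Schwarz together with the block Lipschitz estimate \eqref{eq:g'}, finishing with the isometry $\|I_{:,J_i}p\|_2=\|p\|_2$. There is nothing to compare against here, since the paper does not supply its own proof of this lemma but simply quotes it from \cite{Beck13}; your write-up is in fact the usual proof found in that reference, so you have effectively filled in what the paper left as a citation.
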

	\begin{lem}\label{lem:sigma}
		Assume that Assumption \ref{asm1} i) holds true. Then there exists $\underline{\sigma}$ and $\overline{\sigma}$ such that
		\begin{equation*}
			\underset{x\in\mathcal{D}}{\inf}\ \sigma_{\min}(\nabla f(x))=\underline{\sigma}>0,\
			\underset{x\in\mathcal{D}}{\sup}\ \sigma_{\max}(\nabla f(x))=\overline{\sigma}<\infty.
		\end{equation*} 
		\begin{proof}
			Since $\nabla f(x)$ is continuous in $\mathcal{D}$ and the singular value of $\nabla f(x)$ is a continuous function of $\nabla f(x)$, we can get the singular value of $\nabla f(x)$ is a continuous function of $x$.
			Meanwhile, $\mathcal{D}$ is a bounded closed and $\sigma_{\min}(x_k)$ is continuous function of $x$, there exists a point $\underline{x}\in\mathcal{D}$ such that $0<\underline{\sigma}=\sigma_{\min}(\nabla f(\underline{x}))=\underset{x\in\mathcal{D}}{\inf}\ \sigma_{\min}(\nabla f(x))$. Then for $\forall x_k\in \mathcal{D}$, we have
			\begin{equation*}
				\sigma_{\min}(\nabla f(x_k))\geq\underset{x\in\mathcal{D}}{\inf}\ \sigma_{\min}(\nabla f(x))=\underline{\sigma}>0.
			\end{equation*}
			Similarly there exists a point $\overline{x}\in\mathcal{D}$ such that $\overline{\sigma}=\sigma_{\max}(\nabla f(\overline{x}))=\underset{x\in\mathcal{D}}{\sup}\ \sigma_{\max}(\nabla f(x))<\infty$.  Then for $\forall x_k\in \mathcal{D}$, we have
			\begin{equation*}
				\sigma_{max}(\nabla f(x_k))\leq\underset{x\in\mathcal{D}}{\sup}\ \sigma_{\max}(\nabla f(x))=\overline{\sigma}<\infty.
			\end{equation*}
		\end{proof}
	\end{lem}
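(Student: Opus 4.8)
The plan is to reduce both assertions to the Weierstrass extreme value theorem on the compact set $\mathcal{D}$, after establishing two elementary continuity facts. First I would record that, by Assumption \ref{asm1} i), the map $x\mapsto\nabla f(x)$ is continuous on $\mathcal{D}$, and that each ordered singular value $\sigma_k(\cdot)$ is $1$-Lipschitz on $\mathbb{R}^{m\times n}$ via Weyl's perturbation inequality $|\sigma_k(A)-\sigma_k(B)|\le\|A-B\|_2$. Composing these shows that $x\mapsto\sigma_{\max}(\nabla f(x))$ and $x\mapsto\sigma_{\min}(\nabla f(x))$ are continuous real-valued functions on $\mathcal{D}$.

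For the upper bound I would use $\sigma_{\max}(\nabla f(x))=\|\nabla f(x)\|_2$: this is continuous on the compact (bounded and closed) set $\mathcal{D}$, so by the extreme value theorem it attains its supremum at some $\overline{x}\in\mathcal{D}$, giving $\overline{\sigma}:=\sup_{x\in\mathcal{D}}\sigma_{\max}(\nabla f(x))=\sigma_{\max}(\nabla f(\overline{x}))<\infty$; in particular $\sigma_{\max}(\nabla f(x_k))\le\overline{\sigma}$ for every $x_k\in\mathcal{D}$. The lower bound follows from the same compactness argument applied to $x\mapsto\sigma_{\min}(\nabla f(x))$, which attains its infimum at some $\underline{x}\in\mathcal{D}$, so that $\underline{\sigma}:=\inf_{x\in\mathcal{D}}\sigma_{\min}(\nabla f(x))=\sigma_{\min}(\nabla f(\underline{x}))$ and $\sigma_{\min}(\nabla f(x_k))\ge\underline{\sigma}$.

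The step I expect to be the real obstacle is the strict inequality $\underline{\sigma}>0$. Since $\sigma_{\min}$ here denotes the smallest \emph{nonzero} singular value, the index of that singular value can jump when the rank of $\nabla f(x)$ drops, which would invalidate the continuity claim used above. I would resolve this by working under the (implicitly needed) hypothesis that $\nabla f(x)$ has full rank $r=\min\{m,n\}$ throughout $\mathcal{D}$ --- natural here because the least-squares reformulation \eqref{eq1.2} is assumed well-posed --- so that $\sigma_{\min}(\nabla f(x))=\sigma_r(\nabla f(x))$, a genuinely continuous function; its infimum is then attained at $\underline{x}$ and, being a nonzero singular value, is strictly positive, whence $\underline{\sigma}=\sigma_r(\nabla f(\underline{x}))>0$. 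If one prefers not to impose full rank on all of $\mathcal{D}$, the argument can instead be confined to the closed, bounded level set $\Omega=\{x:g(x)\le g(x_0)\}$ from Assumption \ref{asm1} ii), on which the iterates live; either way, the content of the lemma rests on $\nabla f$ not becoming rank-deficient on the region that matters.
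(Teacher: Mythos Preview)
Your approach is essentially identical to the paper's: both argue that $x\mapsto\sigma_{\min}(\nabla f(x))$ and $x\mapsto\sigma_{\max}(\nabla f(x))$ are continuous (by composing continuity of $\nabla f$ with continuity of singular values) and then invoke the extreme value theorem on the compact set $\mathcal{D}$ to obtain attained, finite extrema. Your treatment is in fact more careful than the paper's --- the paper simply asserts $\underline{\sigma}>0$ without addressing the rank-deficiency subtlety you flag, so your discussion of the implicit full-rank hypothesis (or restriction to $\Omega$) fills a gap the original proof leaves open.
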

	
	\begin{thm}\label{th1}
		Assume that the Assumption \ref{asm1} holds. Let $\{x_k\}$ be generated by Algorithm \ref{alg: SCBGD}. If $0<\delta<\min\{2,\frac{4\underline{\sigma}^2}{L_{\max}}\}$ we have
		\begin{equation*}
			\mathbf{E}[\|g(x_k)\|_2^2]-\|g(x_*)\|_2^2\leq\frac{2\tau\underline{\sigma}^2\overline{\sigma}^2R_0^2}{k\delta(4\underline{\sigma}^2-\delta L_{\max})}.
		\end{equation*} 
		Furthermore, if $g(x)$ is strongly convex with $\gamma>0$ and $\delta^2 L_{\max}\gamma-4\delta\gamma\underline{\sigma}^2+\tau\overline{\sigma}^2\underline{\sigma}^2\geq0$, we can obtain
		\begin{equation*}
			\mathbf{E}[\|g(x_k)\|_2^2]-\|g(x_*)\|_2^2\leq\left(1-\frac{2\delta\gamma}{\tau\overline{\sigma}^2}\left(2-\frac{\delta L_{\max}}{2\underline{\sigma}^2}\right)\right)^k\left(\|g(x_0)\|_2^2-\|g(x_*)\|_2^2\right).
		\end{equation*}
	\end{thm}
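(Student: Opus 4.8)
The plan is the classical ``sufficient decrease $\Rightarrow$ convergence'' route, adapted to the random column block and to the quasi-optimal step size. The skeleton has three steps: (a) a pathwise one-step descent inequality $g(x_{k+1})\le g(x_k)-c\,\|\nabla f_{:,\xi_k}(x_k)^Tf(x_k)\|_2^2$ with an explicit constant $c>0$; (b) taking the conditional expectation over the uniform choice of $\xi_k$, which turns the right-hand side into a multiple of $\|\nabla g(x_k)\|_2^2$; (c) converting this into a recursion for the gap $\Delta_k:=\mathbf{E}[g(x_k)]-g(x_*)$, using convexity for the $O(1/k)$ bound and the gradient-dominance inequality implied by strong convexity for the linear one. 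Since (a) makes the scheme monotone, every iterate remains in the level set $\Omega$, so Lemma~\ref{lem:sigma} and the radius $R_0$ of Assumption~\ref{asm1} are available along the whole sequence.

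For (a), write $x_{k+1}-x_k=-\eta_k I_{:,\xi_k}\nabla f_{:,\xi_k}(x_k)^Tf(x_k)$, which changes only block $\xi_k$, so Lemma~\ref{lem:g(v)} gives $g(x_{k+1})\le g(x_k)+\langle\nabla g(x_k),x_{k+1}-x_k\rangle+\frac{L_{\xi_k}}{2}\|x_{k+1}-x_k\|_2^2$. Exactly as in the descent-direction computation above (using $\nabla g=\nabla f^Tf$ and $\nabla f\,I_{:,\xi_k}=\nabla f_{:,\xi_k}$), the cross term equals $-\eta_k a_k$ and the quadratic term equals $\frac{L_{\xi_k}}{2}\eta_k^2 a_k$, where $a_k:=\|\nabla f_{:,\xi_k}(x_k)^Tf(x_k)\|_2^2$; substituting $\eta_k=\delta a_k/b_k$ with $b_k:=\|\nabla f_{:,\xi_k}(x_k)\nabla f_{:,\xi_k}(x_k)^Tf(x_k)\|_2^2$ yields
\begin{equation*}
	g(x_{k+1})\le g(x_k)-\delta\frac{a_k^2}{b_k}+\frac{L_{\xi_k}\delta^2}{2}\,\frac{a_k^3}{b_k^2}.
\end{equation*}
The crux is to sandwich $b_k/a_k$: since $\nabla f_{:,\xi_k}(x_k)^Tf(x_k)$ lies in the range of $\nabla f_{:,\xi_k}(x_k)^T$, it is orthogonal to the null space of $\nabla f_{:,\xi_k}(x_k)$, so $\sigma_{\min}(\nabla f_{:,\xi_k}(x_k))^2 a_k\le b_k\le\sigma_{\max}(\nabla f_{:,\xi_k}(x_k))^2 a_k$; combining with Lemma~\ref{lem:sigma} and the bracketing of the singular values of a column submatrix of $\nabla f$ by those of $\nabla f$ (Cauchy interlacing for $\nabla f^T\nabla f$) gives $\underline\sigma^2 a_k\le b_k\le\overline\sigma^2 a_k$. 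Hence $a_k^2/b_k\ge a_k/\overline\sigma^2$ and $a_k^3/b_k^2\le a_k^2/(\underline\sigma^2 b_k)$, and after bounding $L_{\xi_k}\le L_{\max}$ the three estimates collapse to $g(x_{k+1})\le g(x_k)-c\,a_k$ with $c$ a positive constant of order $\delta(4\underline\sigma^2-\delta L_{\max})/(\underline\sigma^2\overline\sigma^2)$; the hypothesis $0<\delta<\min\{2,4\underline\sigma^2/L_{\max}\}$ is exactly what keeps $c$ positive.

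For (b), conditioning on $x_k$ and averaging over $\xi_k$ uniform in $\{J_1,\dots,J_\tau\}$, and using that each column index belongs to exactly $\binom{n-1}{q-1}$ of the $\tau=\binom{n}{q}$ blocks,
\begin{equation*}
	\mathbf{E}[a_k\mid x_k]=\frac1\tau\sum_{i=1}^{\tau}\|\nabla f_{:,J_i}(x_k)^Tf(x_k)\|_2^2=\frac1\tau\binom{n-1}{q-1}\|\nabla g(x_k)\|_2^2\ \ge\ \frac1\tau\|\nabla g(x_k)\|_2^2 ,
\end{equation*}
so $\mathbf{E}[g(x_{k+1})\mid x_k]\le g(x_k)-\frac c\tau\|\nabla g(x_k)\|_2^2$ (the constant $c$ being independent of $\xi_k$). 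For (c), in the merely convex case, convexity and $\|x_k-x_*\|_2\le R_0$ give $g(x_k)-g(x_*)\le\langle\nabla g(x_k),x_k-x_*\rangle\le R_0\|\nabla g(x_k)\|_2$; taking total expectations and invoking Jensen's inequality yields $\Delta_{k+1}\le\Delta_k-\frac{c}{\tau R_0^2}\Delta_k^2$, and dividing by $\Delta_k\Delta_{k+1}$ together with $\Delta_{k+1}\le\Delta_k$ telescopes to $1/\Delta_k\ge ck/(\tau R_0^2)$, i.e.\ the first claimed bound once $c$ is inserted. In the $\gamma$-strongly convex case, strong convexity gives the gradient-dominance inequality $\|\nabla g(x)\|_2^2\ge 2\gamma(g(x)-g(x_*))$, whence $\Delta_{k+1}\le(1-\frac{2c\gamma}{\tau})\Delta_k$ and iterating gives the geometric bound; the side hypothesis $\delta^2L_{\max}\gamma-4\delta\gamma\underline\sigma^2+\tau\overline\sigma^2\underline\sigma^2\ge0$ is precisely nonnegativity of the contraction factor $1-\frac{2c\gamma}{\tau}=1-\frac{2\delta\gamma}{\tau\overline\sigma^2}(2-\frac{\delta L_{\max}}{2\underline\sigma^2})$, so the estimate is non-vacuous.

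I expect step (a) to be the genuine obstacle: producing the sharp one-step decrease with a provably positive coefficient. The delicate ingredients are the lower bound $b_k\ge\underline\sigma^2 a_k$ (which fails without the null-space remark above), the transfer of the uniform singular-value bounds from $\nabla f$ to its column submatrices, and the exact accounting of the factor $\frac12$ in $g=\frac12\|f\|_2^2$ and of $L_{\max}$ against $\underline\sigma^2$ — this bookkeeping is what pins down the precise threshold $\delta<4\underline\sigma^2/L_{\max}$ and the quantity $4\underline\sigma^2-\delta L_{\max}$ appearing in the final estimates. Everything downstream, steps (b) and (c), is standard stochastic block-coordinate machinery.
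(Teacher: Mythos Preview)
Your three-step plan (block descent lemma $\Rightarrow$ conditional expectation over $\xi_k$ $\Rightarrow$ convex/strongly-convex recursion on $\psi_k$) is exactly the route the paper takes, with the same use of $b_k/a_k\in[\sigma_{\min}^2,\sigma_{\max}^2]$, the same level-set monotonicity argument to keep $x_k\in\Omega$, and the same telescoping $1/\psi_{k+1}-1/\psi_k\ge\alpha/R_0^2$ in the convex case and $\psi_{k+1}\le(1-2\gamma\alpha)\psi_k$ in the strongly convex case. If anything you are more careful than the paper on two points the paper glosses over: you make explicit that Cauchy interlacing for $\nabla f^T\nabla f$ is what transfers the bounds $\underline\sigma,\overline\sigma$ from $\nabla f$ to its column submatrices, and you compute $\sum_i\|\nabla f_{:,J_i}^Tf\|_2^2=\binom{n-1}{q-1}\|\nabla g\|_2^2$ rather than writing it as an equality with $\|\nabla g\|_2^2$ (the paper's ``$=$'' there is only correct for a partition, not for all $q$-subsets).
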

	\begin{proof}
		Let $g(x_k)=\frac{1}{2}f(x_{k})^Tf(x_{k})$. Using (\ref{eq:g'}) and Lemma \ref{lem:g(v)}, we can obtain
		\begin{align*}
			g(x_{k+1})
			&=g[x_k-\delta\frac{\|\nabla f_{:,\xi_k}(x_k)^Tf(x_k)\|_2^2}{\|\nabla f_{:,\xi_k}(x_k)\nabla f_{:,\xi_k}(x_k)^Tf(x_k)\|_2^2}I_{:,\xi_k}\nabla f_{:,\xi_k}(x_k)^Tf(x_k)]\\
			&\leq g(x_k)-2\delta\left(\nabla f(x_k)^Tf(x_k)\right)^T\frac{\|\nabla f_{:,\xi_k}(x_k)^Tf(x_k)\|_2^2}{\|\nabla f_{:,\xi_k}(x_k)\nabla f_{:,\xi_k}(x_k)^Tf(x_k)\|_2^2}I_{:,\xi_k}\nabla f_{:,\xi_k}(x_k)^Tf(x_k)+\frac{1}{2}\delta^2L_{i_k}\frac{\|\nabla f_{:,\xi_k}(x_k)^Tf(x_k)\|_2^6}{\|\nabla f_{:,\xi_k}(x_k)\nabla f_{:,\xi_k}(x_k)^Tf(x_k)\|_2^4}\\
			&=g(x_k)-2\delta\frac{\|\nabla f_{:,\xi_k}(x_k)^Tf(x_k)\|_2^4}{\|\nabla f_{:,\xi_k}(x_k)\nabla f_{:,\xi_k}(x_k)^Tf(x_k)\|_2^2}+\frac{1}{2}\delta^2L_{i_k}\frac{\|\nabla f_{:,\xi_k}(x_k)^Tf(x_k)\|_2^6}{\|\nabla f_{:,\xi_k}(x_k)\nabla f_{:,\xi_k}(x_k)^Tf(x_k)\|_2^4}\\
			&\leq g(x_k)-\delta\left(2-\frac{\delta L_{i_k}}{2\sigma_{\min}^2(\nabla f_{:,\xi_k}(x_k))}\right)\frac{\|\nabla f_{:,\xi_k}(x_k)^Tf(x_k)\|_2^4}{\|\nabla f_{:,\xi_k}(x_k)\nabla f_{:,\xi_k}(x_k)^Tf(x_k)\|_2^2},\\
		\end{align*}
		where the last inequality makes use of $\nabla f_{:,\xi_k}(x_k)^Tf(x_k)\in \mathcal{R}(\nabla f_{:,\xi_k}(x_k)^T)$.
		Taking the conditional expectation on the above inequality, we obtain
		\begin{align}\label{E[g(xk+1)]<g(xk)-a}
			\mathbf{E}_k[g(x_{k+1})]
			&\leq g(x_k)-\frac{1}{\tau}\sum_{\xi_k}\delta\left(2-\frac{\delta L_{i_k}}{2\sigma_{\min}^2(\nabla f_{:,\xi_k}(x_k))}\right)\frac{\|\nabla f_{:,\xi_k}(x_k)^Tf(x_k)\|_2^4}{\|\nabla f_{:,\xi_k}(x_k)\nabla f_{:,\xi_k}(x_k)^Tf(x_k)\|_2^2}\nonumber\\
			&\leq g(x_k)-\frac{1}{\tau}\sum_{\xi_k}\delta\left(2-\frac{\delta L_{\max}}{2\sigma_{\min}^2(\nabla f_{:,\xi_k}(x_k))}\right)\frac{\|\nabla f_{:,\xi_k}(x_k)^Tf(x_k)\|_2^4}{\sigma_{\max}^2(\nabla f_{:,\xi_k}(x_k))\|\nabla f_{:,\xi_k}(x_k)^Tf(x_k)\|_2^2}\nonumber\\
			&\leq g(x_k)-\frac{\delta}{\tau\overline{\sigma}^2}\left(2-\frac{\delta L_{\max}}{2\underline{\sigma}^2}\right)\sum_{\xi_k}\|\nabla f_{:,\xi_k}(x_k)^Tf(x_k)\|_2^2\nonumber\\
			&=g(x_k)-\frac{\delta}{\tau\overline{\sigma}^2}\left(2-\frac{\delta L_{\max}}{2\underline{\sigma}^2}\right)\|\nabla g(x_k)\|_2^2,
		\end{align}
		where the last inequality comes from the Lemma \ref{lem:sigma}.
		Then we obtain
		\begin{equation*}
			\mathbf{E}_k[g(x_{k+1})]-g(x_*)\leq g(x_k)-g(x_*)-\frac{\delta}{\tau\overline{\sigma}^2}\left(2-\frac{\delta L_{\max}}{2\underline{\sigma}^2}\right)\|\nabla g(x_k)\|_2^2.
		\end{equation*}
		Let $\psi_k=\mathbf{E}_{k-1}[g(x_k)]-g(x_*)$, $\alpha=\frac{\delta}{\tau\overline{\sigma}^2}\left(2-\frac{\delta L_{\max}}{2\underline{\sigma}^2}\right)$, then we can get 
		\begin{equation}\label{eq:psi}
			\psi_{k+1}\leq\psi_k-\alpha\mathbf{E}_{k-1}[\|\nabla g(x_k)\|_2^2]\leq\psi_k-\alpha(\mathbf{E}_{k-1}[\|\nabla g(x_k)\|_2])^2.
		\end{equation}
		Since $\delta\in(0,\min\{2,\frac{4\underline{\sigma}^2}{L_{\max}}\})$, we know $\left(2-\frac{\delta L_{\max}}{2\underline{\sigma}^2}\right)>0$. Combining with (\ref{E[g(xk+1)]<g(xk)-a}), we can get $x_k$ in the level set $\Omega$ by induction. Then we have $\|x_k-x_*\|_2^2\leq R_0$. By convexity of $g(x)$, it holds
		\begin{equation*}
			g(x_k)-g(x_*)\leq\nabla g(x_k)^T(x_k-x_*)\leq\|\nabla g(x_k)\|_2\|x_k-x_*\|_2\leq R_0\|\nabla g(x_k)\|_2.
		\end{equation*} 
		Taking the conditional expectation on the above inequality, we can obtain $\mathbf{E}_{k-1}[\|\nabla g(x_k)\|_2]\geq\frac{1}{R_0}\psi_k$.
		Substituting the above equality into (\ref{eq:psi}), we have $\psi_{k}-\psi_{k+1}\geq\frac{\alpha}{R_0^2}\psi_k^2$.
		Then $\frac{1}{\psi_{k+1}}-\frac{1}{\psi_k}\geq\frac{\alpha}{R_0^2}$.
		By applying the above inequality recursively, we have $\frac{1}{\psi_k}\geq\frac{1}{\psi_0}+\frac{k\alpha}{R_0^2}\geq\frac{k\alpha}{R_0^2}$.
		Then we can get
		\begin{equation*}
			\psi_k=\mathbf{E}_{k-1}[g(x_k)]-g(x_*)\leq\frac{R_0^2}{k\alpha}\leq\frac{2\tau\underline{\sigma}^2\overline{\sigma}^2R_0^2}{k\delta(4\underline{\sigma}^2-\delta L_{\max})}.
		\end{equation*}
		Furthermore, when $g(x)$ is strongly convex with $\gamma$, that is 
		\begin{equation*}
			g(v)\geq g(u)+\nabla g(u)^T(v-u)+\frac{\gamma}{2}\|v-u\|_2^2,\ for\ all\ u, v.
		\end{equation*} 
		Taking the minimum of both sides with respect to $v$ and setting $u=x_k$, we get
		\begin{equation*}
			g(x_*)\geq g(x_k)-\frac{1}{2\gamma}\|\nabla g(x_k)\|_2^2.
		\end{equation*} 
		Taking the conditional expectation on the above inequality, we have
		\begin{equation*}
			g(x_*)\geq\mathbf{E}_{k-1}[g(x_k)]-\frac{1}{2\gamma}\mathbf{E}_{k-1}[\|\nabla g(x_k)\|_2^2]\\
		\end{equation*}		
		\begin{equation*}
			\mathbf{E}_{k-1}[\|\nabla g(x_k)\|_2^2]\geq2\gamma(\mathbf{E}_{k-1}[g(x_k)]-g(x_*)).
		\end{equation*}
		Combining with (\ref{eq:psi}), we can obtain $\psi_{k+1}\leq\psi_k-2\gamma\alpha\psi_k=(1-2\gamma\alpha)\psi_k$.
	\end{proof}
	
	\begin{cor}
		If $g(x)$ is not convex but satisfies the PL condition \cite{Loizou20} i.e. if there exists $\mu>0$ such that : $\|\nabla g(x)\|_2^2\geq2\mu (g(x)-g(x_*))$. Let $\{x_k\}$ be generated by Algorithm \ref{alg: SCBGD}. If  $0<\delta<\min\{2,\frac{4\underline{\sigma}^2}{L_{\max}}\}$ we have 
		\begin{equation*}
			\mathbf{E}[\|g(x_k)\|_2^2]-\|g(x_*)\|_2^2\leq\left(1-\frac{2\delta\mu}{\tau\overline{\sigma}^2}\left(2-\frac{\delta L_{\max}}{2\underline{\sigma}^2}\right)\right)^k\left(\|g(x_0)\|_2^2-\|g(x_*)\|_2^2\right).
		\end{equation*}
	\end{cor}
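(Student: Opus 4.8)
The plan is to reuse the one-step descent estimate already established inside the proof of Theorem \ref{th1}, which never actually uses convexity of $g$, and then to feed it the PL inequality in place of the convexity-based lower bound on $\|\nabla g(x_k)\|_2$ used there.

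First I would observe that the chain of inequalities leading to (\ref{E[g(xk+1)]<g(xk)-a}), namely
\begin{equation*}
	\mathbf{E}_k[g(x_{k+1})]\leq g(x_k)-\frac{\delta}{\tau\overline{\sigma}^2}\left(2-\frac{\delta L_{\max}}{2\underline{\sigma}^2}\right)\|\nabla g(x_k)\|_2^2,
\end{equation*}
relies only on Assumption \ref{asm1} i)--iii), Lemma \ref{lem:g(v)}, Lemma \ref{lem:sigma} and the range restriction $0<\delta<\min\{2,4\underline{\sigma}^2/L_{\max}\}$; convexity of $g$ is not invoked in its derivation. Since this same restriction makes the coefficient $\delta\left(2-\delta L_{\max}/(2\underline{\sigma}^2)\right)$ strictly positive, the pathwise bound $g(x_{k+1})\leq g(x_k)$ holds, so by the identical induction as in Theorem \ref{th1} the iterates $x_k$ stay in the level set $\Omega$ and Lemma \ref{lem:sigma} remains applicable along the whole trajectory.

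Next I would apply the PL condition $\|\nabla g(x_k)\|_2^2\geq 2\mu\bigl(g(x_k)-g(x_*)\bigr)$ to the right-hand side above and subtract $g(x_*)$, obtaining
\begin{equation*}
	\mathbf{E}_k[g(x_{k+1})]-g(x_*)\leq\left(1-\frac{2\delta\mu}{\tau\overline{\sigma}^2}\left(2-\frac{\delta L_{\max}}{2\underline{\sigma}^2}\right)\right)\bigl(g(x_k)-g(x_*)\bigr).
\end{equation*}
Taking total expectations and writing $\psi_k=\mathbf{E}[g(x_k)]-g(x_*)$ together with $\alpha=\frac{\delta}{\tau\overline{\sigma}^2}\left(2-\frac{\delta L_{\max}}{2\underline{\sigma}^2}\right)$ as in Theorem \ref{th1}, this is $\psi_{k+1}\leq(1-2\mu\alpha)\psi_k$; unrolling the recursion from index $k$ down to $0$ gives the claimed geometric decay, exactly mirroring the last lines of the strongly convex case with $\gamma$ replaced by $\mu$.

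I do not expect a genuine obstacle: the corollary is essentially a plug-in substitution of the PL inequality for the strong-convexity estimate on $\|\nabla g(x_k)\|_2^2$. The only points that require a little care are (i) to confirm that the descent estimate (\ref{E[g(xk+1)]<g(xk)-a}) and the level-set containment of $\{x_k\}$ are truly convexity-free, so that Lemma \ref{lem:sigma} still applies, and (ii) to note, as in the strongly convex statement, that the contraction factor $1-2\mu\alpha$ lies in $[0,1)$ precisely under the analogue of the condition on $\delta$, $L_{\max}$ and the modulus (here $\mu$), which is tacitly assumed.
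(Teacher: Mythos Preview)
Your proposal is correct and follows exactly the route the paper intends: the corollary is stated without proof precisely because it is obtained from (\ref{E[g(xk+1)]<g(xk)-a}) by substituting the PL inequality for the strong-convexity bound used at the end of Theorem \ref{th1}, with $\mu$ in place of $\gamma$. Your observations that (\ref{E[g(xk+1)]<g(xk)-a}) and the level-set containment do not rely on convexity, and that the contraction factor lies in $[0,1)$ under the stated $\delta$-range (plus the analogue of the $\gamma$-condition), are exactly the points one needs to check.
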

	
	\section{Numerical experiments}
	\label{sec3}
	In this section, we present some numerical experiments to verify the effectiveness of the SCBGD method for solving nonlinear systems of equations. We compare the SCBGD method with GD, SGD and SCBN methods on different nonlinear problems. We use the number of iteration steps (IT) and the elapsed computing time in seconds (CPU) to measure the effectiveness of these methods. The IT and CPU are obtained from the average value obtained from 10 experiments. The iteration termination criterion is $\|f(x_k)\|_2\leq10^{-6}$ or the IT $\geq 200000$ .We use '--' to indicate that the method does not converge.
	All experiments are performed using MATLAB (version R2021a) on a personal computer with a 2.20 GHz central processing unit  (13th Gen Intel (R) Core (TM) i9-13900HX), 32.0 GB memory and Windows operating system (64-bit Windows 11).
	
	\begin{exm}
	Broyden tridiagonal problem\cite{Luksan18}\label{ex:Broyden tridiagonal}.
	\begin{equation*}
		f_k(x)=(0.5x_k-3)x_k+x_{k-1}+2x_{k+1}-1=0, \text{where } x_0=0 \text{ and } x_{n+1}=0
	\end{equation*}

	\end{exm}
	In this experiment, we set the initial estimate $x_0=(-1.5,-1.5,...,-1.5)^T\in \mathbb{R}^n$ and $n=200:200:1000$. The corresponding results of these methods are depicted in Table \ref{tab:tridiagonal}, which also shows the IT and CPU of these methods. It shows that the SCBGD method requires less computing time than other methods. In Figure \ref{figure tridiagonal}, we find SCBGD method has a significantly better performance than other methods.

	\begin{table}
	\centering
	\caption{IT and CPU of the GD, SGD, SCBN  and SCBGD methods for Broyden tridiagonal problem}
	\label{tab:tridiagonal}
	\vspace{1mm}
	\resizebox{0.55\linewidth}{1.9cm}{
		\begin{tabular}{cclllll}
			\toprule
			Method&  $n$& $200$& $400$& $600$& $800$& $1000$\\
			\midrule
			GD&
			IT & $201$& $203$&$205$&$206$&$208$\\
			&CPU & $\mathbf{0.0178}$& $0.1183$&$0.2211$&$0.3483$&$0.4965$\\
			SGD&IT
			& $3653$& $7455$&$11306$&$15177$&$19118$\\
			($q=10$)&CPU & $0.0201$& $0.0659$&$0.1429$&$0.2917$&$0.4423$\\
			SCBN&IT
			& $352$& $721$&$1102$&$1480$&$1862$\\
			($q=100$, $\alpha=100$)&CPU & $0.0191$& $0.0781$&$0.1819$&$0.3187$&$0.4123$\\
			SCBGD&IT
			& $3509$& $7220$&$10963$&$14783$&$18612$\\
			($q=10$, $\delta=1$)&CPU & $0.0181$& $\mathbf{0.0522}$&$\mathbf{0.1010}$&$\mathbf{0.1711}$&$\mathbf{0.2420}$\\\bottomrule\end{tabular}}
    \end{table}
	\begin{figure}[!t]
		\centering
		\subfigure[$n=600$]{
			\includegraphics[scale=0.55]{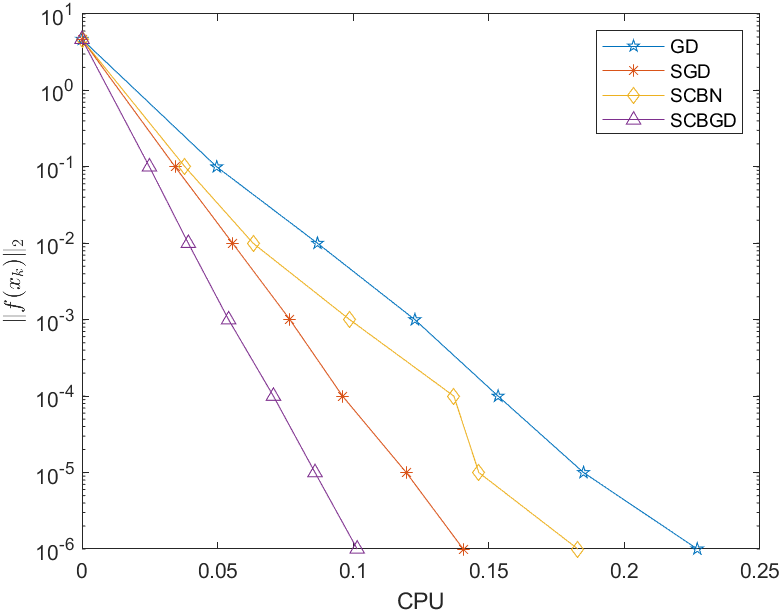}}
		\subfigure[$n=1000$]{
			\includegraphics[scale=0.55]{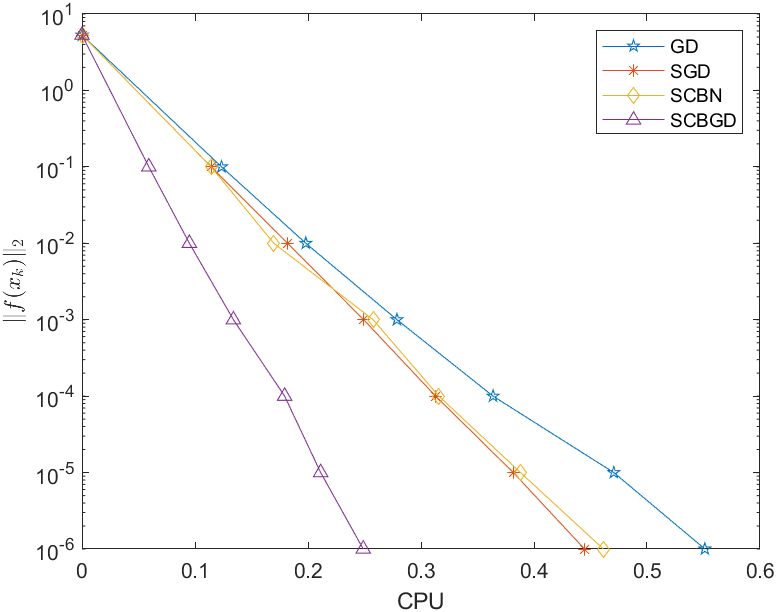}}\\
		\caption[d]{CPU versus residual for Broyden tridiagonal problem }
		\label{figure tridiagonal}
	\end{figure}

	\begin{exm}
		Tridiagonal system \cite{Li89}
		\begin{align*}
			&f_k(x)=4(x_k-x_{k+1}^2)=0, &k=1,\\
			&f_k(x)=8x_k(x_k^2-x_{k-1})-2(1-x_k)+4(x_k-x_{k+1}^2)=0, &1<k<n,\\
			&f_k(x)=8x_k(x_k^2-x_{k-1})-2(1-x_k)=0, &k=n.
		\end{align*}
	\end{exm}
	In this experiment, we set the initial estimate $x_0=(0.5,0.5,...,0.5)^T\in \mathbb{R}^n$ and $n=200:200:1000$. The corresponding results of these methods are shown in Table \ref{tab:Tridiagonal} and Figure \ref{figure Tridiagonal}. Since the SGD method does not convergence for $n>=600$, we remove the image of the SGD method in Figure \ref{figure Tridiagonal}. Table \ref{tab:Tridiagonal} and Figure \ref{figure Tridiagonal} indicate that for big size system the SCBGD method requires less CPU time than other methods, demonstrating that the SCBGD method outperforms the others.
	
	\begin{table}
		\centering
		\caption{IT and CPU of the GD, SGD, SCBN and SCBGD methods for Tridiagonal system}
		\label{tab:Tridiagonal}
		\vspace{1mm}
		\resizebox{0.55\linewidth}{1.9cm}{
			\begin{tabular}{cclllll}
					\toprule
					Method&  $n$& $200$& $400$& $600$& $800$& $1000$\\
					\midrule
					GD&
					IT & $15507$& $15680$&$15771$&$15825$&$15861$\\
					&CPU & $1.8744$& $10.4756$&$17.8661$&$27.8024$&$42.1529$\\
					SGD&IT
					& $11079$& $15719$&$--$&$--$&$--$\\
					($q=100$)&CPU & $\mathbf{0.5598}$& $2.3530$&$--$&$--$&$--$\\
					SCBN&IT
					& $28549$& $60221$&$86223$&$114276$&$144472$\\
					($q=100$, $\alpha=105$)&CPU & $1.4584$& $5.6616$&$16.1257$&$29.2071$&$43.4630$\\
					SCBGD&IT
					& $11953$& $15215$&$17486$&$28128$&$30515$\\
					($q=100$, $\delta=1$)&CPU & $0.6111$& $\mathbf{2.0744}$&$\mathbf{3.8972}$&$\mathbf{7.3346}$&$\mathbf{10.0678}$\\\bottomrule\end{tabular}}
			\end{table}

		\begin{figure}[!t]
		\centering
		\subfigure[$n=600$]{
				\includegraphics[scale=0.55]{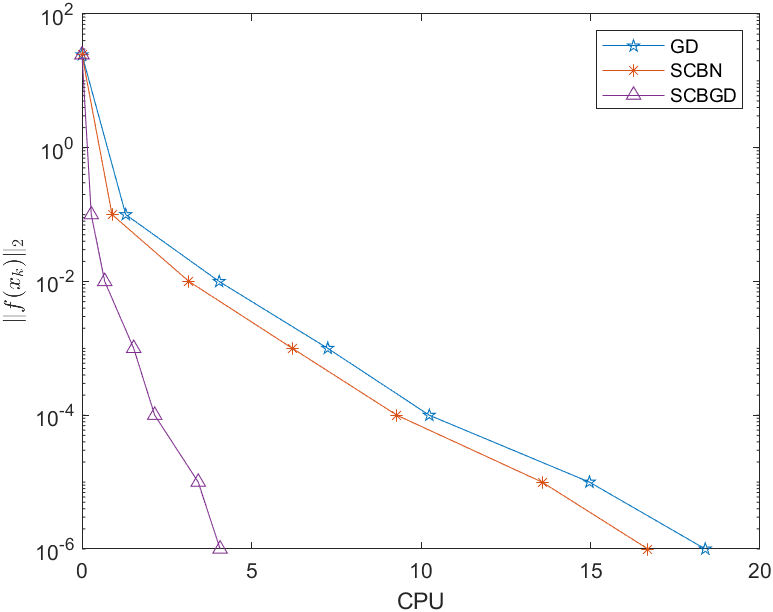}}
		\subfigure[$n=1000$]{
				\includegraphics[scale=0.55]{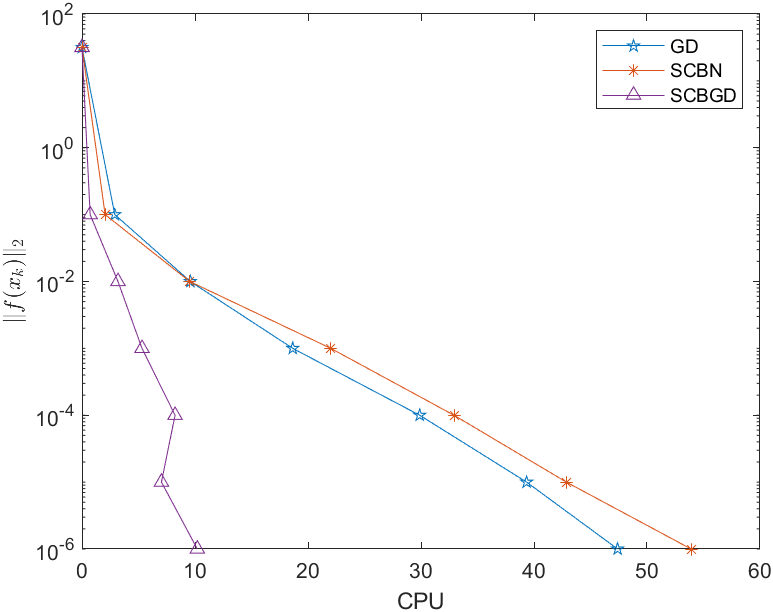}}\\
		\caption[d]{CPU versus residual for Tridiagonal system}
		\label{figure Tridiagonal}
		\end{figure}
	
	\section{Conclusion}
	\label{sec4}
	In this paper, we introduced the idea of column block into the GD method and proposed the SCBGD method with a uniform probability for solving nonlinear systems of equations. Under certain conditions, we proved the convergence of the SCBGD method and derived the upper bound for the convergence rate of the new method. In the numerical experiments, the CPU time of the SCBGD method is less than that of the GD, SGD, and SCBN methods, which demonstrates the efficiency of the new method. In future work, we will investigate strategies for choosing block sizes.

	\section*{Use of AI tools declaration}
	The authors declare they have not used Artificial Intelligence (AI) tools in the creation of this article.
		\section*{ Funding}
	This work was supported by the National Natural Science Foundation
	of China [grant numbers  61931025, 42374156]
	\small
	
	\setlength{\bibspacing}{0\baselineskip}
	\bibliographystyle{model1-num-names}
	\bibliography{reference1}
	
\end{document}